\newtheorem{thm}{Theorem}
\newenvironment{thmbis}[1]
  {%
   \addtocounter{thm}{-1}%
   \begin{thm}}
  {\end{thm}}
\newtheorem{prop}{Proposition}
\newtheorem{cor}{Corollary}
\newtheorem{lem}{Lemma}
\newtheorem{rem}{Remark}
\begin{document}
\title{On the existence of local frames of CR vector bundles}
\subjclass[2010]{32V99}
\keywords{CR manifold; CR imbedding; CR vector bundle}
\author{Tomonori Kajisa}
\thanks{Tomonori Kajisa; ACRO YASHIRODAI 103, Yashirodai 3-59, Meitou-Ku, Nagoya, 465-0092, Japan, \ t-kajisa@citv.jp}
\date{}
\maketitle
\begin{abstract}
 Given a CR manifold $D$, we shall show that existence of a CR local frame of a certain CR vector bundle over $D$ is equivalent to the local imbeddability of $D$. This will imply that there exists a CR vector bundle which doesn't have CR local frames. Using this bundle, we shall construct CR line bundles over $3$-dimensional non-imbeddable CR manifolds which don't have CR local frames. 
\end{abstract}

\section{Introduction}
\ In CR geometry, CR vector bundle is a basic notion. In contrast to holomorphic vector bundles over complex manifolds, CR local frames do not always exist, although it was shown by Webster \cite{Web} that CR vector bundles always admit CR local frames if the manifold is strongly pseudoconvex (spc) and of dimension $\geq 7$.
 In this paper we shall say that a CR vector bundle is CR framable (framable for 
short) if it has CR local frames around any point and consider framability problem of a CR vector bundle over a $3$-dim CR manifold mainly. First we discuss a 
relation between local imbeddability of a CR manifold and framability of a CR vector bundle. This relation was studied in \cite{Jac} and \cite{Web}. We refine the result of Webster \cite{Web}. 
\begin{thm} \label{thm1}
 Let $(D,T^{0,1}D)$ be a $2n-1 \ (n\geq 2)$ dimensional CR manifold.
 Then $(D,T^{0,1}D)$ has a CR coframe locally if and only if it admits a local imbedding to $\mathbb{C}^{n}$.
\end{thm}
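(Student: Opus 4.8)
The plan is to identify the stated ``CR coframe'' with a local frame of the rank-$n$ CR vector bundle $H:=(T^{0,1}D)^{\perp}\subset\mathbb{C}T^{*}D$ by CR sections. This $H$ carries a natural CR structure: for $\bar L\in\Gamma(T^{0,1}D)$ and $\omega\in\Gamma(H)$ one sets $(\bar\partial_{H}\omega)(\bar L):=\mathcal{L}_{\bar L}\omega$, and since $(\mathcal{L}_{\bar L}\omega)(\bar L')=\bar L(\omega(\bar L'))-\omega([\bar L,\bar L'])=0$ by involutivity of $T^{0,1}D$, the Lie derivative is again a section of $H$; the identity $[\mathcal{L}_{\bar L},\mathcal{L}_{\bar L'}]=\mathcal{L}_{[\bar L,\bar L']}$ shows this is a flat partial connection along $T^{0,1}D$, i.e. a CR structure. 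A CR coframe is then a local frame $\omega^{1},\dots,\omega^{n}$ of $H$ with $\bar\partial_{H}\omega^{j}=0$. The observation driving both directions is that for a CR function $f$ (so $\bar L f=0$ for all $\bar L\in T^{0,1}D$) the differential $df$ lies in $H$ and satisfies $\mathcal{L}_{\bar L}(df)=d(\bar L f)=0$, so $df$ is a CR section of $H$.

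The direction ``imbeddable $\Rightarrow$ CR coframe'' is then immediate. A local CR imbedding is a map $z=(z_{1},\dots,z_{n})\colon D\to\mathbb{C}^{n}$ whose components are CR functions and whose real differential has rank $2n-1$; since $dz_{j}\in H$ and $d\bar z_{j}\in\overline{H}$ with $H\cap\overline{H}$ of rank one, the immersion condition is equivalent to $dz_{1},\dots,dz_{n}$ being $\mathbb{C}$-linearly independent, hence a frame of the rank-$n$ bundle $H$. By the observation above each $dz_{j}$ is a CR section, so $\{dz_{j}\}$ is a CR coframe.

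For the converse I would start from a CR coframe $\omega^{1},\dots,\omega^{n}$ and extract its integrability content. Since $\iota_{\bar L}\omega^{j}=0$, Cartan's formula gives $\mathcal{L}_{\bar L}\omega^{j}=\iota_{\bar L}\,d\omega^{j}$, so the CR condition $\mathcal{L}_{\bar L}\omega^{j}=0$ is exactly $\iota_{\bar L}\,d\omega^{j}=0$ for all $\bar L\in T^{0,1}D$, that is
\[
 d\omega^{j}\in\Gamma(\Lambda^{2}H),\qquad j=1,\dots,n .
\]
As $\Lambda^{2}H$ lies in the algebraic ideal $\mathcal{I}=\langle\omega^{1},\dots,\omega^{n}\rangle$, this says precisely that $\mathcal{I}$ is a \emph{differential ideal}, $d\mathcal{I}\subset\mathcal{I}$. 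The goal is then to upgrade this to exactness: to produce CR functions $z_{1},\dots,z_{n}$ with $dz_{1},\dots,dz_{n}$ again spanning $H$, for then $z=(z_{1},\dots,z_{n})$ is the desired local imbedding. Equivalently, I must show that $\mathcal{I}$ is generated by exact forms.

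This last step is where I expect the real difficulty, and it is analytic rather than formal. The annihilator $\mathcal{I}^{\perp}=H^{\perp}=T^{0,1}D$ is involutive, so the real Frobenius theorem applies to $\mathcal{I}$ \emph{as a real ideal}; but $T^{0,1}D$ is a complex distribution with $T^{0,1}D\cap\overline{T^{0,1}D}=0$, so this yields no first integrals. The complex (Nirenberg) Frobenius theorem would produce the holomorphic first integrals $z_{j}$, but it requires in addition that $T^{0,1}D\oplus\overline{T^{0,1}D}$ be involutive, equivalently that the Levi form vanish, which fails on e.g.\ strongly pseudoconvex $D$. Thus $\mathcal{I}$ cannot be integrated by a soft argument; indeed, any attempt to build an integrable complex structure on $D\times\mathbb{R}$ with the $\omega^{j}$ as its $(1,0)$-forms reduces, through the Newlander--Nirenberg condition, to solving $\bar L$-equations that already amount to finding CR functions. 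The heart of the proof must therefore be to show that the \emph{mere existence} of the CR coframe supplies the solvability needed to integrate $\mathcal{I}$: I would construct the potentials $z_{j}$ by a Newlander--Nirenberg-type iteration adapted to the locally integrable structure defined by $H$, using the flatness of $\bar\partial_{H}$ and the structure equations $d\omega^{j}=\tfrac12\sum c^{j}_{ik}\,\omega^{i}\wedge\omega^{k}$, together with the constraints that $d^{2}=0$ imposes on the $c^{j}_{ik}$, to drive the scheme. Establishing local integrability of $H$ from framability in this way---ruling out the Nirenberg-type obstruction exactly when the coframe exists---is the crux of the argument.
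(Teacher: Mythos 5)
Your easy direction (imbedding $\Rightarrow$ coframe) is fine and agrees with the paper: the pullbacks $dz_j$ of the coordinate differentials under a CR imbedding form a CR coframe. But the hard direction is not a proof. After correctly reducing the CR-coframe condition to the statement that $\mathcal{I}=\langle\omega^1,\dots,\omega^n\rangle$ is a differential ideal, you explicitly leave the crux --- producing $n$ independent CR functions --- to an unspecified ``Newlander--Nirenberg-type iteration,'' with no formulation of the scheme, no indication of what estimates would make it converge, and no mechanism by which the existence of the coframe removes the Nirenberg-type obstruction. Since non-imbeddable CR structures exist and the relevant $\overline{\partial}_b$-equations are in general not locally solvable, this is exactly where all the content of the theorem lies; naming it ``the crux'' does not discharge it.

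The missing idea is that one need not integrate $\mathcal{I}$ analytically from scratch: the paper reduces to a realizability criterion of Jacobowitz \cite{Jac}, namely that $(D,T^{0,1}D)$ is locally imbeddable around $p$ if and only if there is a vector field $Y$ with $\mathcal{L}_YT^{0,1}D\subset T^{0,1}D$ and $Y_p\notin T^{0,1}_pD+T^{1,0}_pD$, together with the equivalence of the first condition with $\mathcal{L}_Y\Omega=\lambda\Omega$ for a nowhere-vanishing section $\Omega$ of $\bigwedge^n(T^{0,1}D)^\perp$. Given the coframe, one sets $\Omega=\theta_1\wedge\cdots\wedge\theta_n$; then $d\Omega=0$, since each $d\theta_i$ lies in $\bigwedge^2(T^{0,1}D)^\perp$ and $(T^{0,1}D)^\perp$ has rank $n$, so $\mathcal{L}_Y\Omega=d(i_Y\Omega)$ by Cartan's formula. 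Choosing $Y=\sum_i f_iv_i+f_nT$ so that $\theta_j(Y)=\delta_{ij}$ --- a pointwise linear-algebra problem, solvable because the matrix $A$ relating $\langle\theta_j\rangle$ to a smooth coframe $\langle u_i,\eta\rangle$ is invertible --- makes $i_Y\Omega$ equal to $\pm\,\theta_1\wedge\cdots\widehat{\theta_i}\cdots\wedge\theta_n$, whence $d(i_Y\Omega)$ is a multiple of $\Omega$, again because each $d\theta_j$ lies in the span of the $\theta_k\wedge\theta_l$. Invertibility of $A$ guarantees that for some choice of $i$ the transversal component $f_n$ is nonzero at $p$, giving the required $Y$. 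This construction is purely algebraic; all the hard analysis is encapsulated in the quoted criterion. Your proposal instead sets out to redo that analysis directly and stops before doing it, so it is incomplete precisely at the step the theorem is about.
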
 

A CR coframe is a CR frame of a certain CR vector bundle (see section \ref{sec2}). So it will imply that there exists a non-framable CR vector bundle over any non-imbeddable CR manifold. Particularly we obtain
 a non-framable CR vector bundle of rank $2$ over every non-imbeddable 3-dim CR manifold. (There exist a lot of examples of non-imbeddable spc manifolds. See \cite{JaTr}.)
 Next we ask whether there exist non-framable CR line bundles over a non-imbeddable 3-dim CR manifold. We introduce CR vector bundle structure space $\mathcal{H}''(F)$ over a $\mathbb{C}$-vector bundle $F$ and discuss this problem differential geometrically. Using the non-framable CR vector bundle of rank $2$ we can construct non-framable CR line bundles under some condition.
 
\begin{thm} \label{thm2}
 Let $D$ be a $3$-dim CR manifold, $F'$ be a $\mathbb{C}$-line bundle of over $D$, $F''$ be a $\mathbb{C}$-vector bundle of rank $2$ over $D$ and $p\in D$. Assume the existence of a non-framable CR vector bundle structure $\omega_0\in \mathcal{H}''(F'')$.  
If there are no CR local frames but there is a nowhere-vanishing CR local section around $p$ for $\omega_0$,  then there exist non-framable CR line bundle structures in $\mathcal{H}''(F')$. 
\end{thm}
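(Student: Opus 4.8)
The plan is to reduce framability of a line bundle structure to a local exactness statement for the tangential Cauchy--Riemann operator, and then to read off the required non-exact datum from the failure of $\omega_0$ to be framable. Since $D$ is $3$-dimensional, $T^{0,1}D$ has rank $1$, so there are no $(0,2)$-forms and the integrability condition for a CR structure is vacuous; moreover $\mathrm{End}(F')$ is canonically trivial, so in a local trivialization of $F'$ near $p$ a structure $\omega'\in\mathcal{H}''(F')$ is recorded by a single $(0,1)$-form $\eta$, and every $(0,1)$-form germ at $p$ is so realized (extend it to a global form by a cutoff, which does not alter the germ at $p$). A nowhere-vanishing CR local section of $(F',\omega')$ near $p$ is a nonvanishing function $f$ satisfying $\bar\partial_{b} f = -f\eta$ in this trivialization; writing $f=e^{g}$ shows that such a section exists if and only if $\eta$ is $\bar\partial_{b}$-exact near $p$. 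For a line bundle a CR local frame is exactly a nowhere-vanishing CR local section, so
\[
(F',\omega')\text{ is framable at }p\iff \eta\text{ is }\bar\partial_{b}\text{-exact near }p .
\]
Thus producing a non-framable structure in $\mathcal{H}''(F')$ amounts to exhibiting a $(0,1)$-form germ at $p$ that is not in the local range of $\bar\partial_{b}$.

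Next I would extract such a germ from the hypothesis on $\omega_0$. Let $s$ be the given nowhere-vanishing CR local section of $(F'',\omega_0)$ near $p$ and complete it to a smooth local frame $(s,t_0)$ of $F''$. Since $s$ is CR, I may write $\bar\partial_{F''}t_0 = \beta\otimes s + \gamma\otimes t_0$ for $(0,1)$-forms $\beta,\gamma$ near $p$; here $\gamma$ is precisely the structure form of the quotient line bundle $F''/\langle s\rangle$. The key claim is that $\bar\partial_{b}$ is not locally surjective onto $(0,1)$-forms near $p$. Indeed, suppose it were. Then I can solve $\gamma=\bar\partial_{b} h$, set $b=e^{-h}$ (nowhere zero, solving $\bar\partial_{b} b = -b\gamma$), solve $\bar\partial_{b} a = -b\beta$ using surjectivity again, and form $t=as+bt_0$. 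By construction $\bar\partial_{F''}t=0$, and $(s,t)$ is a frame because $b\neq0$; this is a CR local frame of $(F'',\omega_0)$ near $p$, contradicting the assumption that $\omega_0$ has none there. Hence $\bar\partial_{b}$ fails to be locally surjective, so some $(0,1)$-form germ $\eta$ at $p$ is not $\bar\partial_{b}$-exact.

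Finally I would assemble the two steps: realize this non-exact $\eta$ as the structure form near $p$ of a structure $\omega'\in\mathcal{H}''(F')$ (possible by the first paragraph, since $\mathcal{H}''(F')$ is nonempty and, modulo a fixed reference structure, is an affine space over global $(0,1)$-forms into which any germ at $p$ can be fed by a cutoff). By the equivalence above, $\omega'$ has no CR local frame around $p$, that is, it is non-framable, which is the desired conclusion.

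I expect the main obstacle to be the bookkeeping of conventions in the first paragraph --- pinning down exactly how a section of $(F',\omega')$ is declared CR and how the structure form transforms under a change of local trivialization --- so that ``$\eta$ is $\bar\partial_{b}$-exact'' is seen to be a well-defined, trivialization-independent criterion for framability (a change of trivialization by a nonvanishing $g$ alters $\eta$ by $\bar\partial_{b}\log g$, hence preserves the exactness class). The analytic content, namely that the failure of local solvability of $\bar\partial_{b}$ is forced by the non-framability of $\omega_0$, is then immediate from the frame-completion computation, and it is this solvability defect --- the same phenomenon underlying non-imbeddability in Theorem \ref{thm1} --- that is transported onto the line bundle $F'$.
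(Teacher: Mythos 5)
Your proposal is correct and follows essentially the same route as the paper: the paper also reduces framability of line bundle structures to local solvability of $\overline{\partial}_b$ (its Proposition \ref{Pro4}), puts $\omega_0$ in triangular form using the nowhere-vanishing CR section, and applies solvability twice (via diagonal and unipotent gauge transformations $S'$, $S$) to produce a CR frame and a contradiction. Your frame completion $t=as+bt_0$ with $\gamma=\overline{\partial}_b h$, $\overline{\partial}_b a=-b\beta$ is exactly that composite gauge transformation written invariantly, with the added (welcome) explicitness about the cutoff step realizing a non-exact germ as a global structure in $\mathcal{H}''(F')$.
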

Furthermore it is shown that if there exists a non-framable CR line bundle structure, we can find a lot of non-framable structures. 
\begin{thm} \label{thm3}
Let $D$ be a $3$-dim CR manifold, $E'=\mathbb{C}\times D$, and $F'$ be a $\mathbb{C}$-line bundle over $D$.
If there is a non-framable CR line bundle structure over $E'$, then there exist non-framable structures in $\mathcal{H}''(F')$ arbitrarily close to any framable structure in $\mathcal{H}''(F')$. 
\end{thm}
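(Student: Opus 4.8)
The plan is to use the affine structure of $\mathcal{H}''(F')$ together with the linearity of a scalar solvability problem. Since $\dim D=3$, the bundle $T^{0,1}D$ has complex rank one, so $\Lambda^{0,2}D=0$ and the integrability condition defining a CR structure is vacuous; hence $\mathcal{H}''(F')$ is an affine space modeled on the $(0,1)$-forms with values in $\operatorname{End}(F')\cong\mathbb{C}$. Fix a local nonvanishing frame $e$ of $F'$ over a small ball and a local generator $\bar{L}$ of $T^{0,1}D$; then a structure $\omega\in\mathcal{H}''(F')$ is recorded there by a single function $a$ through the rule that $s=fe$ is $\omega$-CR iff $\bar{L}f=af$. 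The first step is the standard reformulation of framability: on a small ball $\omega$ has a CR local frame iff it has a nonvanishing local CR section, and writing $f=e^{g}$ this is equivalent to solvability of the scalar equation $\bar{L}g=a$.

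Next I would read off the obstruction supplied by the hypothesis. The given non-framable structure $\omega_1$ over $E'=\mathbb{C}\times D$ is represented, in the standard trivialization, by a function $a_1$, and its non-framability means precisely that $\bar{L}g=a_1$ is not locally solvable at some point $p_0\in D$. Choosing a neighborhood $U$ of $p_0$ trivializing $F'$ and a cutoff $\chi\in C_c^{\infty}(U)$ with $\chi\equiv1$ near $p_0$, and given any framable $\omega_0\in\mathcal{H}''(F')$, I set $\omega_t:=\omega_0+t\,\chi\,\omega_1$, where $\omega_1$ is transplanted to $F'$ over $U$ via the chosen frames. Because integrability is automatic in dimension three, each $\omega_t$ is a genuine element of $\mathcal{H}''(F')$, and $\omega_t\to\omega_0$ as $t\to0$ in the $C^{\infty}$ topology, producing the required nearby structures.

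Finally I would establish non-framability of $\omega_t$ for $t\neq0$. Near $p_0$ we have $\chi\equiv1$, so in the frame $e$ the structure $\omega_t$ is represented by $a_0+t\,a_1$, where $\bar{L}g_0=a_0$ is solvable because $\omega_0$ is framable at $p_0$. By the reformulation, $\omega_t$ is framable near $p_0$ iff $\bar{L}g=a_0+t\,a_1$ is solvable; subtracting $g_0$, this is equivalent to solvability of $\bar{L}h=t\,a_1$, which for $t\neq0$ is equivalent, after scaling, to solvability of $\bar{L}h=a_1$. The latter fails at $p_0$ by hypothesis, so $\omega_t$ has no CR local frame near $p_0$ and is therefore non-framable. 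Letting $t\to0$ yields non-framable structures arbitrarily close to the arbitrary framable $\omega_0$.

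I expect the main obstacle to lie in the transplantation step: one must check that $a_1$, after being transported through the identification of the local frames of $E'$ and $F'$ over $U$ and relative to the same generator $\bar{L}$, still records a non-solvable scalar equation. This is clean once one observes that changing the frame by a nonvanishing factor $u$ replaces $a_1$ by $a_1+\bar{L}(\log u)$, so the solvability class of $a_1$ modulo $\bar{L}(C^{\infty})$ is frame-independent; this is exactly the invariant that matters. After matching the local models, the linearity argument is elementary, and the remaining point, that a single point of non-framability forces the global structure $\omega_t$ to be non-framable, is immediate from the definition.
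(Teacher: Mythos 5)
Your proof is correct, and it reaches the conclusion by a route that is genuinely more direct than the paper's in the key step. The shared skeleton is the same: reduce framability of a line bundle structure to local solvability of the scalar equation $\overline{\partial}_b g=-\omega$ via the logarithm (this is the paper's Proposition \ref{Pro4}), and perturb a framable structure by small multiples of (a transplant of) the non-framable one, using gauge invariance --- in your scalar form, subtraction of a particular solution $g_0$ --- to strip off the framable part; that subtraction step is exactly the paper's Proposition \ref{Pro} specialized to rank one. Where you diverge is in proving that the perturbation by $t$ times the non-framable form is itself non-framable. The paper never invokes your scaling remark; instead, writing the non-framable structure as $\omega_0$, it sets $c=\inf\{\delta;\ \delta\omega_0 \text{ non-framable}\}$, and rules out $c>0$ by gauge-transforming a non-framable $\delta_2\omega_0$ ($\delta_2\geq c$) by the solution $L_{\delta_1}$ attached to a framable $\delta_1\omega_0$ ($\delta_1<c$), obtaining a non-framable $(\delta_2-\delta_1)\omega_0$ with $\delta_2-\delta_1$ arbitrarily small, a contradiction; it then only concludes that non-framable multiples exist with arbitrarily small coefficient. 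Your observation that $\bar{L}h=t\,a_1$ is solvable iff $\bar{L}(h/t)=a_1$ is solvable --- legitimate because $t$ is a nonzero constant and the equation is $\mathbb{C}$-linear --- makes the infimum and contradiction machinery unnecessary and yields the stronger statement that \emph{every} $t\neq 0$ gives a non-framable structure, not merely a sequence tending to zero. A second, minor difference: the paper first reduces Theorem \ref{thm3} to the trivial bundle case (Theorem \ref{thm3}$'$) by the general fact stated before the theorems, whereas you work directly on $F'$, using $\operatorname{End}(F')\cong\mathbb{C}$ and a cutoff to transplant $\omega_1$; both are sound, and your closing remark that only the class of $a_1$ modulo $\overline{\partial}_b C^{\infty}$ matters (frame-independence) is precisely what justifies the transplantation.
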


\section{Preliminaries}  \label{sec2}
Let $D$ be a $2n-1 \ (n\geq 2)$ dimensional $C^{\infty }$ manifold and $T^{0,1}D$ be a subbundle of $\mathbb{C}TD:=\mathbb{C}\otimes TD$ of rank of $n-1$ such that $T^{0,1}D\cap \overline{T^{0,1}D}=\{0\}$
and $[\Gamma(T^{0,1}D), \Gamma(T^{0,1}D)]\subset \Gamma(T^{0,1}D)$, where $\Gamma(T^{0,1}D)$ denotes the set of $C^\infty$ sections of $T^{0,1}D$ on $D$. The pair $(D,T^{0,1}D)$ is called a CR manifold. We set $T^{1,0}D=\overline{T^{0,1}D}$.
It is possible that we define a CR manifold in another way using differential forms. Namely, let $G$ be a subbundle of $\mathbb{C}TD^*$ of rank $n$ 
and $\mathcal{I}(G)$ be the exterior ideal of complex differential forms on $D$ generated by $G$. If $G+\overline{G}=\mathbb{C}TD^*$ and $d\mathcal{I}(G)\subset\mathcal{I}(G)$ are satisfied, the pair $(D,G)$ is called a CR manifold.
 In these two definitions ${T^{0,1}D}^\bot:=\{w\in \mathbb{C}TD^*; w(v)=0,  \ for \ \mathrm{any}\ v\in T^{0,1}D\} $ coincides with $G$. Let $\mathcal{A}^p (0\leq p)$ be the sheaf of $\mathbb{C}$-valued $p$-forms, let $\mathcal{A}^{p}(F)$ be the sheaf of $F$-valued $p$-forms for a $C^\infty$ $\mathbb{C}$-vector bundle $F$ 
 and let $\Gamma(\mathcal{A}^p)$ be sections of $\mathcal{A}^p$ over $D$.
 We may define locally free subsheaves of $\mathcal{A}^0$ modules 
 \[\hat{\mathcal{A}}^{p,q}=\{\omega\in \mathcal{A}^{p+q}| v_0\wedge v_1\wedge \cdots \wedge v_q \lrcorner \omega=0, \ \mathrm{for \ any} \ v_0,\cdots,v_q\in T^{0,1}D\}\] for $p \geq 1$ and $q \geq 0$, and set $\hat{\mathcal{A}}^{0,q}=\mathcal{A}^q\ (q\geq 0)$ and $\hat{\mathcal{A}}^{p,-1}=0$, where $\lrcorner$ denotes the interior product.
We now define smooth $(p,q)$-forms $\mathcal{A}^{p,q}$ by 
\[\mathcal{A}^{p,q}:=\hat{\mathcal{A}}^{p,q}/\hat{\mathcal{A}}^{p+1,q-1}\cong \mathcal{A}^0(\wedge^p(T^{0,1}D^\bot)\otimes \wedge^q T^{0,1}D^*).\]  
From the integrability condition $d\mathcal{I}(T^{0,1}D^\bot)\subset \mathcal{I}(T^{0,1}D^\bot)$, we have $d(\hat{\mathcal{A}}^{p,q})\subset \hat{\mathcal{A}}^{p,q+1}$. So we can define an operator $\overline{\partial}_b:\mathcal{A}^{p,q}\rightarrow \mathcal{A}^{p,q+1}$ as the exterior derivative $d$ composed with the projection $\pi: \hat{\mathcal{A}}^{p,q}\rightarrow \mathcal{A}^{p,q}$ as follows. 
\[\overline{\partial}_b [v]= \pi\cdot  dv \ \ \ \mathrm{for} \ [v]\in \mathcal{A}^{p,q}. \]

For each fixed $p$, $\{\mathcal{A}^{p,q}\}$ forms a complex. We will often call $\{\mathcal{A}^{0,q}\}$ 
simply the $\overline{\partial}_b$ complex. We refer the reader to \cite{Web} and \cite{Leb} to follow up the fundamental materials of CR manifolds. 
 Let $F$ be a $\mathbb{C}$-vector bundle of rank $r$ over $(D,T^{0,1}D)$. A CR vector bundle structure over $F$ is defined by a linear differential operator $\overline{\partial}_F: \mathcal{A}^0(F) \rightarrow \mathcal{A}^0(T^{0,1}D^*\bigotimes F)$ such that $\overline{\partial}_F(af)=(\overline{\partial}_ba)f+a\overline{\partial}_Ff$ for 
 $a \in \mathcal{A}^0,\  f\in \mathcal{A}^0(F)$ and $\overline{\partial}_{F} \cdot \overline{\partial}_{F}=0$ hold, where $\overline{\partial}_{F}$ is extended to $\overline{\partial}_{F}: \mathcal{A}^0(T^{0,1}D^*\bigotimes F) \rightarrow \mathcal{A}^0(\bigwedge ^2T^{0,1}D^*\bigotimes F)$ so that $\overline{\partial}_{F}\phi(X,Y)=\frac{1}{2}\{(\overline{\partial}_F\phi (Y))(X)-(\overline{\partial}_{F}\phi (X))(Y)-\phi([X,Y])\}$ holds for any $\phi\in \mathcal{A}^0(T^{0,1}D^*\bigotimes F)$ and any $X,Y\in \mathcal{A}^0(T^{0,1}D)$ and, $\overline{\partial}_{F} \cdot \overline{\partial}_{F}$ means their composition. The pair $(F,\overline{\partial}_F)$ is called a CR vector bundle. Let $e=<e_i>(1\leq i\leq r)$ be a local frame on an open set $U\subset D$ and
  let $\overline{\partial}_{F}e=\omega e$, where $\omega$ is a $\mathcal{A}^{0,1}$-valued $\textrm{r}\times\textrm{r}$ matrix function. Then $\omega$ satisfies $\overline{\partial}_b\omega-\omega\wedge \omega=0$ from the integrability condition $\overline{\partial}_{F} \cdot \overline{\partial}_{F}=0.$ 
Let $e'$ be another local frame on $U$. Then, there is a $\textrm{GL}(r,\mathbb{C})$ valued function $a$ such that $e'=ae$. Then $\overline{\partial}_{F}e'=(\overline{\partial}_ba)e+a\overline{\partial}_Fe=(\overline{\partial}_ba+a\omega)e$. If there exists a local section $u$ of $F$ such that $\overline{\partial}_{F}u=0$, we call it a CR local section and 
 if a set of nowhere-vanishing CR sections forms a local frame of $F$, we call it a CR local frame.  
A CR vector bundle has a CR local frame around $p \in D$ if and only if a non-linear PDE $a^{-1}\overline{\partial}_ba=-\omega$ has a local solution such that $\det a\neq 0$ around $p$. We say that a CR vector bundle is CR framable, or framable for short if there exist CR local frames everywhere. Examples of CR vector bundles are given in \cite{Leb}. CR vector bundles $\bigwedge^p ({T^{0,1}D})^\bot \ (1\leq p \leq n)$ are particularly important.
Because they are determined by CR structure of the base space $D$. A section $\phi$ of $\bigwedge^p ({T^{0,1}D})^\bot$ such that $d\phi \in \bigwedge^{p+1} (T^{0,1}D)^\bot$ is called a 
CR p-form. A frame of $T^{0,1}D^\bot$ composed of CR 1-forms is called a CR coframe. A CR $n$-form is also important. If a Levi non-degenerate CR manifold has a nowhere-vanishing CR $n$-form, it admits a pseudo-Einstein structure. (See \cite{Lee}).

The following formula for a $\mathbb{C}$-line bundle over a CR manifold is easily proved.
\begin{prop} Let $(D,T^{0,1}D)$ be a CR manifold and $f,g$ be nowhere-vanishing
 functions on an open set $U\subset D$. Then

\begin{equation}
(fg)^{-1}\overline{\partial}_b(fg)=f^{-1}\overline{\partial}_bf+g^{-1}\overline{\partial}_bg.
\end{equation}
\end{prop}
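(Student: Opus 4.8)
The plan is to reduce the identity to the Leibniz (product) rule for $\overline{\partial}_b$ acting on functions, and then divide by $fg$. Concretely, I would first establish that
\[
\overline{\partial}_b(fg)=(\overline{\partial}_b f)\,g+f\,(\overline{\partial}_b g)
\]
for any two smooth functions $f,g$ on $U$, with no assumption yet that they are nowhere-vanishing. This is the scalar case of the Leibniz rule already built into the definition of a CR vector bundle structure (take $F$ to be the trivial line bundle $\mathbb{C}\times D$), so in principle it could be quoted directly; but since the statement is about $\overline{\partial}_b$ on $\mathcal{A}^{0,0}=\mathcal{A}^0$, I would give the one-line derivation from the definition $\overline{\partial}_b[v]=\pi\cdot dv$.

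The key observation is that $\pi:\hat{\mathcal{A}}^{0,1}\to\mathcal{A}^{0,1}$ is the quotient projection of $\mathcal{A}^0$-modules $\mathcal{A}^1\to\mathcal{A}^1/\hat{\mathcal{A}}^{1,0}$, hence is $\mathcal{A}^0$-linear: for any $a\in\mathcal{A}^0$ and $\eta\in\mathcal{A}^1$ one has $\pi(a\eta)=a\,\pi(\eta)$. Combining this with the ordinary Leibniz rule $d(fg)=(df)g+f\,dg$ for the exterior derivative gives
\[
\overline{\partial}_b(fg)=\pi\big(d(fg)\big)=\pi\big((df)g+f\,dg\big)=g\,\pi(df)+f\,\pi(dg)=(\overline{\partial}_b f)\,g+f\,(\overline{\partial}_b g).
\]

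Finally, using that $f$ and $g$ are nowhere-vanishing on $U$ so that $(fg)^{-1}=f^{-1}g^{-1}$ is well defined, I would multiply the displayed identity on the left by $(fg)^{-1}$ and simplify:
\[
(fg)^{-1}\overline{\partial}_b(fg)=f^{-1}g^{-1}\big((\overline{\partial}_b f)\,g+f\,(\overline{\partial}_b g)\big)=f^{-1}\overline{\partial}_b f+g^{-1}\overline{\partial}_b g,
\]
which is the asserted formula. There is essentially no serious obstacle here: the only point requiring care is the $\mathcal{A}^0$-linearity of the projection $\pi$, which is what legitimizes pulling the scalar factors $g$ and $f$ out past $\pi$; everything else is the standard Leibniz rule together with commutativity of multiplication of scalar functions. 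I would therefore expect the whole argument to occupy only a few lines.
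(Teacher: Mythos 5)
Your proof is correct. The paper in fact gives no proof of this proposition at all (it merely remarks that the formula ``is easily proved''), and your argument---deriving the Leibniz rule $\overline{\partial}_b(fg)=(\overline{\partial}_b f)g+f(\overline{\partial}_b g)$ from $d(fg)=(df)g+f\,dg$ together with the $\mathcal{A}^0$-linearity of the quotient projection $\pi$, then dividing by the nowhere-vanishing product $fg$---is exactly the routine computation the author evidently has in mind.
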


\section{Local imbeddability and framability of a CR vector bundle}
 Let $(D,T^{0,1}D)$ be a $2n-1 \ (n\geq 2)$ dimensional CR manifold and $p \in D$. The CR imbeddimg problem can be described from the viewpoint related to local $1$-parameter group of
 CR diffeomorphism. We shall quote several lemmas from \cite{Jac}.
For a real vector field $X$ let $\mathcal{L}_X\omega$ denote the Lie derivative acting on forms and vector fields.  If 
$Y=X_1+iX_2$ is a complex vector field, $\mathcal{L}_Y$ means the operator $\mathcal{L}_{X_1}+i\mathcal{L}_{X_2}$.
Note that the identity
\begin{equation}
\mathcal{L}_Y\omega=d(i_Y\omega)+i_Y(d\omega)
\end{equation}
is valid, where $\omega$ is any differential form.
\begin{lem}
 The following are equivalent:
\begin{enumerate}
\item $(D,T^{0,1}D)$ is locally imbeddable around $p$. 
 \item There exists a vector field $Y$ around $p$ with $\mathcal{L}_YT^{0,1}D\subset T^{0,1}D$ and $Y_p \notin T^{0,1}_p D+T^{1,0}_p D$.
\end{enumerate}
\end{lem}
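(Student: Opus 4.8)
The plan is to prove the two implications separately, using CR functions (local solutions $f$ of $\overline{\partial}_b f = 0$) as the bridge: a local CR imbedding into $\mathbb{C}^n$ amounts to $n$ CR functions $\zeta_1,\dots,\zeta_n$ whose differentials are $\mathbb{C}$-linearly independent at $p$ (equivalently span $T^{0,1}_pD^{\perp}$), and condition (2) will be matched to the existence of such a collection. The conceptual hinge is that an infinitesimal CR symmetry is exactly a vector field carrying CR functions to CR functions: if $\mathcal{L}_Y T^{0,1}D \subset T^{0,1}D$ and $V\in\Gamma(T^{0,1}D)$, then for a CR function $\zeta$ one has $V(Y\zeta) = Y(V\zeta) - [Y,V]\zeta = 0$, so $Y\zeta$ is again CR.

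For $(1)\Rightarrow(2)$ I would start from a local imbedding, giving CR functions $z_1,\dots,z_{n-1},w$ with independent differentials at $p$, so that $dz_1,\dots,dz_{n-1},d\bar z_1,\dots,d\bar z_{n-1},dw$ form a coframe near $p$. I then define $Y$ to be the unique complex vector field with $Yz_k = Y\bar z_k = 0$ $(1\le k\le n-1)$ and $Yw = 1$. Transversality $Y_p\notin T^{0,1}_pD + T^{1,0}_pD$ is read off from $Yw=1$ together with $dw\notin\mathrm{span}(dz_k,d\bar z_k)$. The symmetry condition is then immediate from the bracket identity: for $V\in\Gamma(T^{0,1}D)$ and any of the CR functions $\zeta\in\{z_k,w\}$ one has $[Y,V]\zeta = Y(V\zeta) - V(Y\zeta) = 0$, since $V\zeta = 0$ and $Y\zeta$ is constant; because the $d\zeta$ span $T^{0,1}_pD^{\perp}$ this forces $[Y,V]\in T^{0,1}D$, i.e. $\mathcal{L}_Y T^{0,1}D\subset T^{0,1}D$.

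For the converse $(2)\Rightarrow(1)$, which I expect to be the harder direction, the idea is to enlarge $T^{0,1}D$ by $Y$. Set $\mathcal{V} = T^{0,1}D + \mathbb{C}Y$. Using $[\,\Gamma(T^{0,1}D),\Gamma(T^{0,1}D)\,]\subset\Gamma(T^{0,1}D)$ together with $\mathcal{L}_Y T^{0,1}D\subset T^{0,1}D$, one checks that $\mathcal{V}$ is involutive; a dimension count based on $Y_p\notin T^{0,1}_pD + T^{1,0}_pD$ shows that near $p$ it is a subbundle of rank $n$ with $\mathcal{V}+\overline{\mathcal{V}} = \mathbb{C}TD$ and $\mathcal{V}\cap\overline{\mathcal{V}}$ of constant rank $1$. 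I then apply the complex Frobenius theorem of Nirenberg to obtain coordinates $(z_1,\dots,z_{n-1},s)$ in which $\mathcal{V}$ is spanned by $\partial_{\bar z_1},\dots,\partial_{\bar z_{n-1}},\partial_s$. The functions $z_1,\dots,z_{n-1}$ are annihilated by $\mathcal{V}\supset T^{0,1}D$, hence are CR; writing $T^{0,1}D = \mathrm{span}(\partial_{\bar z_k}+c_k\partial_s)$, the $n$-th CR function is the remaining obstruction.

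The decisive step is to exploit the hypothesis on $Y$ once more: re-expressing $\mathcal{L}_Y T^{0,1}D\subset T^{0,1}D$ in these coordinates yields $\partial_s c_k = 0$, i.e. the structure is rigid. Then the integrability $[\partial_{\bar z_j}+c_j\partial_s,\partial_{\bar z_k}+c_k\partial_s] = 0$ reduces to $\partial_{\bar z_j}c_k = \partial_{\bar z_k}c_j$, so the system $\partial_{\bar z_k}g = -c_k$ is a genuine $\overline\partial$-problem with $s$ a mere parameter; by the $\overline\partial$-Poincaré lemma it is locally solvable, and $\zeta_n := s+g$ is a CR function with $\partial_s\zeta_n\neq 0$. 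Consequently $z_1,\dots,z_{n-1},\zeta_n$ are $n$ CR functions with independent differentials, giving the local imbedding. The main obstacle is exactly this point: for a general CR manifold, producing the extra CR function is the (possibly unsolvable) tangential $\overline{\partial}_b$ equation, and the whole force of condition (2) is that it rigidifies the structure and thereby replaces $\overline{\partial}_b$ by the always-solvable ambient $\overline\partial$. Verifying the constant-rank hypotheses of Nirenberg's theorem near $p$ and carrying out the reduction to $\partial_s c_k = 0$ are the steps I would treat with the most care.
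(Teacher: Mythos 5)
First, a point of comparison: the paper does not actually prove this lemma --- its ``proof'' is the citation ``See \cite{Jac}'' --- so your proposal has to be judged on its own merits rather than against an argument in the text. Your direction $(1)\Rightarrow(2)$ is essentially correct, with two small repairs: one must label the coordinates so that the characteristic covector has nonzero $dw$-component (this is what makes $dz_k,d\bar z_k,dw$ a genuine coframe), and transversality of the dual field $Y$ is not ``read off'' from $dw\notin\mathrm{span}(dz_k,d\bar z_k)$; rather, one checks that a nonzero \emph{real} annihilator $\theta$ of $T^{0,1}_pD+T^{1,0}_pD$ cannot lie in $\mathrm{span}(dz_k,d\bar z_k)$ (its $d\bar z_k$-components vanish because $\theta$ kills $T^{0,1}_pD$ and the $d\bar z_k$ restrict to a basis of $(T^{0,1}_pD)^*$; reality then kills the $dz_k$-components), whence $\theta(Y_p)\neq 0$.

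The genuine gap is in $(2)\Rightarrow(1)$, exactly at the step you flagged: the claim that $\mathcal{L}_YT^{0,1}D\subset T^{0,1}D$ ``yields $\partial_s c_k=0$'' is false. Nirenberg's coordinates are adapted to the bundle $\mathcal{V}=T^{0,1}D+\mathbb{C}Y$, not to the field $Y$ itself, and $Y$ need not be $\partial_s$. Writing $Y=V_0+b\,\partial_s$ with $V_0\in\Gamma(T^{0,1}D)$ and $b$ a nonvanishing complex-valued function, and using $T^{0,1}D\cap\mathbb{C}\partial_s=0$, the hypothesis $[Y,L_k]\in\Gamma(T^{0,1}D)$ for $L_k=\partial_{\bar z_k}+c_k\partial_s$ is equivalent to
\[
b\,\partial_s c_k \;=\; L_k b ,
\]
not to $\partial_s c_k=0$. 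You cannot normalize $b\equiv 1$: an admissible change of coordinates replaces $s$ by a real function $s'$, which rescales $b$ only by the real factor $\partial_s s'$, so the phase of $b$ is an obstruction; and replacing $Y$ by $\lambda Y+V$ preserves the hypotheses only when $\lambda$ is CR, whose existence (transversally) is precisely what is to be proved. A concrete counterexample to the rigidity claim: for the Heisenberg structure $L=\partial_{\bar z}-iz\partial_t$, pass to the coordinate $t'=t\,e^{|z|^2}$; then $L=\partial_{\bar z}+c'\partial_{t'}$ with $c'=zt'-ize^{|z|^2}$, so $\partial_{t'}c'=z\neq 0$, although $Y=\partial_t=e^{|z|^2}\partial_{t'}$ is a transverse field with $\mathcal{L}_YT^{0,1}D\subset T^{0,1}D$. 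So rigidity cannot be ``read off''; it simply does not hold in these coordinates.

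The gap can be closed, and this is where the real content of the lemma lies. From $b\,\partial_sc_k=L_kb$ together with the integrability relations $L_jc_k=L_kc_j$ (which hold because $[L_j,L_k]$ is a multiple of $\partial_s$ lying in $T^{0,1}D$, hence zero), one verifies that the overdetermined system $\partial_s f=b^{-1}$, $\partial_{\bar z_k}f=-c_k b^{-1}$ satisfies all its compatibility conditions; since $\mathrm{span}(\partial_{\bar z_k},\partial_s)$ is the model elliptic structure, this system is locally solvable, and the solution satisfies $L_kf=0$, $\partial_sf\neq 0$ --- your missing $n$-th CR function. Alternatively, the standard route (essentially that of \cite{Jac}) avoids the issue: after replacing $Y$ by $iY$ if necessary so that $\mathrm{Re}\,Y$ is transverse at $p$, endow $D\times\mathbb{R}_t$ with the rank-$n$ involutive bundle $T^{0,1}D+\mathbb{C}(Y+i\partial_t)$; this is an integrable almost complex structure, Newlander--Nirenberg provides holomorphic coordinates, and $D\times\{0\}$ sits inside as a real hypersurface inducing the original CR structure. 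Either way, substantially more than the claimed reduction to $\partial_sc_k=0$ is required.
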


\textit{Proof}.\ See \cite{Jac}.

\begin{lem}
For any vector field $Y$ the following are equivalent.
\begin{enumerate}
\item $\mathcal{L}_YT^{0,1}D\subset T^{0,1}D$
\item $\mathcal{L}_Y\bigwedge^{n} (T^{0,1}D)^\bot \subset \bigwedge^{n} (T^{0,1}D)^\bot$.
\item For every nowhere-vanishing section $\Omega$ of $\bigwedge^{n} (T^{0,1}D)^\bot $ there is some function $\lambda$ 
such that $\mathcal{L}_Y\Omega=\lambda \Omega$.
\item There is some nowhere-vanishing section $\Omega$ of $\bigwedge^{n} (T^{0,1}D)^\bot$ and some function $\lambda$ such that $\mathcal{L}_Y\Omega=\lambda \Omega$.
\end{enumerate}
\end{lem}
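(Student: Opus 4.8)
The plan is to prove the cycle $(1)\Rightarrow(2)\Rightarrow(3)\Rightarrow(4)\Rightarrow(1)$, with everything built on two elementary observations about the line bundle $\bigwedge^{n}(T^{0,1}D)^\bot$ (recall $(T^{0,1}D)^\bot=:G$ has rank $n$, so its top exterior power is a line bundle). First, I would record the duality $\mathcal{L}_Y T^{0,1}D\subset T^{0,1}D \iff \mathcal{L}_Y G\subset G$: for $v\in T^{0,1}D$ and $\omega\in G$ one has $\omega(v)\equiv 0$, so differentiating the pairing gives $0=\mathcal{L}_Y(\omega(v))=(\mathcal{L}_Y\omega)(v)+\omega(\mathcal{L}_Y v)$, and each direction follows by feeding in the hypothesis on the appropriate factor, using the rank count $\dim G=n$, $\dim T^{0,1}D=n-1$ in $(2n-1)$ dimensions to conclude $G^\bot=T^{0,1}D$ (hence the biduality needed for the converse). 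Second, since $\Omega$ is a nowhere-vanishing section of the line bundle $\bigwedge^{n}G$, it is locally decomposable, $\Omega=\theta_1\wedge\cdots\wedge\theta_n$ with $\theta_j$ a frame of $G$; from this I would extract the characterization
\[
 v\in T^{0,1}D \iff i_v\Omega=0,
\]
which holds because $i_v\Omega=\sum_j(-1)^{j-1}\theta_j(v)\,\theta_1\wedge\cdots\widehat{\theta_j}\cdots\wedge\theta_n$ vanishes exactly when every $\theta_j(v)=0$, i.e. when $v\in G^\bot=T^{0,1}D$.

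With these in hand the easy implications go quickly. For $(1)\Rightarrow(2)$ I would use the duality above together with the fact that $\mathcal{L}_Y$ is a derivation of the wedge product: $\mathcal{L}_Y(\theta_1\wedge\cdots\wedge\theta_n)=\sum_j \theta_1\wedge\cdots\wedge\mathcal{L}_Y\theta_j\wedge\cdots\wedge\theta_n$, and since each $\mathcal{L}_Y\theta_j\in G$ the whole sum lies in $\bigwedge^{n}G$. For $(2)\Rightarrow(3)$ the point is simply that $\bigwedge^{n}G$ is a line bundle: if $\mathcal{L}_Y\Omega$ is again a section and $\Omega$ is nowhere vanishing, then $\mathcal{L}_Y\Omega=\lambda\Omega$ for a unique function $\lambda$. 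And $(3)\Rightarrow(4)$ is immediate once one notes that nowhere-vanishing local sections of a line bundle exist.

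The crux is $(4)\Rightarrow(1)$, and here the key tool is the commutator identity $[\mathcal{L}_Y,i_v]=i_{[Y,v]}$ (valid for complex vector fields by extending $\mathcal{L}_Y=\mathcal{L}_{X_1}+i\mathcal{L}_{X_2}$ linearly). Given the hypothesis $\mathcal{L}_Y\Omega=\lambda\Omega$, I would take an arbitrary $v\in T^{0,1}D$ and compute, using the characterization above,
\[
 i_{[Y,v]}\Omega=\mathcal{L}_Y(i_v\Omega)-i_v(\mathcal{L}_Y\Omega)=\mathcal{L}_Y(0)-i_v(\lambda\Omega)=-\lambda\,i_v\Omega=0,
\]
so $[Y,v]=\mathcal{L}_Y v\in T^{0,1}D$, which is precisely $(1)$. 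I expect this last implication to be the main obstacle, not because of any long computation but because it is where the two preliminary observations must be combined correctly: one needs the decomposability of $\Omega$ to get the clean equivalence $v\in T^{0,1}D\iff i_v\Omega=0$, and one needs the commutator identity to transfer the (scalar) eigen-equation for $\Omega$ into the structural invariance of $T^{0,1}D$. The remaining care is only in checking that all arguments, stated locally, are independent of the chosen frame, which follows since $\lambda$ merely rescales when $\Omega$ is rescaled.
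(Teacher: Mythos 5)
Your proof is correct. Note, however, that the paper itself does not prove this lemma at all --- it simply refers to Jacobowitz \cite{Jac} --- so there is no internal argument to compare against; what you have written is a complete, self-contained proof where the paper gives only a citation. Your two preliminary observations (the duality $\mathcal{L}_Y T^{0,1}D\subset T^{0,1}D \iff \mathcal{L}_Y G\subset G$ obtained by differentiating the pairing $\omega(v)\equiv 0$, and the pointwise characterization $v\in T^{0,1}D \iff i_v\Omega=0$ for a nowhere-vanishing decomposable $\Omega$) are exactly the right tools, and the commutator identity $[\mathcal{L}_Y,i_v]=i_{[Y,v]}$, extended complex-bilinearly, disposes of the crux $(4)\Rightarrow(1)$ cleanly. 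Two small points deserve a sentence in a polished write-up. First, in $(1)\Rightarrow(2)$ a general section of $\bigwedge^{n}G$ is locally $f\,\theta_1\wedge\cdots\wedge\theta_n$ with $f$ possibly vanishing, so the derivation computation should include the extra term $(Yf)\,\theta_1\wedge\cdots\wedge\theta_n$, which clearly stays in $\bigwedge^{n}G$; this is a trivial addition but your argument as stated only covers decomposable (hence nowhere-vanishing) sections. Second, $(3)\Rightarrow(4)$ requires reading ``section'' locally --- a line bundle need not admit a nowhere-vanishing \emph{global} section, in which case $(3)$ would hold vacuously while $(4)$ fails --- but this local reading is the intended one, since the lemma is only ever applied on a neighborhood of a point (both here and in \cite{Jac}), and you flag this correctly.
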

\textit{Proof}.\ See \cite{Jac}.

\begin{proof}[Proof of Theorem \ref{thm1}]
Let $U$ be an open set in $D$ such that the local triviality (in the sense of $C^{\infty}$) $TD|_U\cong U\times \mathbb{R}^{2n-1}$ 
holds.
Then, there is a nowhere-vanishing real vector field $T$ on $U$ and we have a decomposition
\begin{equation}
\mathbb{C}TU=T^{0,1}U+T^{1,0}U+\mathbb{C}T.
\end{equation} 
From the canonical isomorphisms $T^{0,1}U^{*}\cong (T^{1,0}U+\mathbb{C}T)^\bot, \mathbb{C}T^{*}\cong (T^{0,1}U+T^{1,0}U)^\bot$, 
\begin{equation}
\mathbb{C}TU^{*}=T^{0,1}U^{*}+T^{1,0}U^{*}+\mathbb{C}T^{*}
\end{equation}also holds.

Let $\Gamma(T^{1,0}U)=<v_i>_{1\leq i\leq n-1}$. Then $\Gamma(\mathbb{C}TU)=<v_i,\overline{v_i},T>_{1\leq i\leq n-1}$.
Taking dual basis, $\Gamma(\mathbb{C}TU^{*})=<u_i,\overline{u_i},\eta>_{1\leq i\leq n-1}$, where $u_i(v_i)=1$ and $\eta(T)=1$.

Assume the existence of a CR coframe $<\theta_i>_{1\leq i\leq n}$ on $U$. Set $\Omega=\theta_1\wedge \cdots \wedge\theta_n$. 
We want to find $Y\notin \Gamma(T^{0,1}U+T^{1,0}U)$ such that $\mathcal{L}_Y\Omega=\lambda \Omega$ for some function $\lambda$.
$\mathcal{L}_Y\Omega=d(i_Y\Omega)+i_Y(d\Omega)=d(\sum _{i=1}^{i=n}(-1)^{i+1}\theta_i(Y)\theta_1\wedge \cdots \hat{\theta_i}\wedge \cdots \wedge\theta_n)$.
Set $Y=\sum_{i=1}^{i=n-1}f_iv_i+f_nT$ for some functions $f_i \ (1\leq i\leq n)$. We determine $f_i$ so that $Y$ satisfies the condition
above. $<\theta_i>$ can be written as follows.
\begin{equation}
\left(
\begin{array}{ccc}
\theta_1, & \cdots & ,\theta_n
\end{array}
\right)=
\left(
\begin{array}{cccc}
u_1, & \cdots & ,u_{n-1} & ,\eta
\end{array}
\right)A    \label{eqm1}
\end{equation} for some $A$ such that $\det A\neq 0$.
From (\ref{eqm1}),
\begin{equation}
\left(
\begin{array}{ccc}
\theta_1(Y),  &
 \cdots &
 ,\theta_n(Y)
\end{array}
\right)=
\left(
\begin{array}{ccc}
f_1,  &
 \cdots &
 ,f_{n} 
\end{array}
\right)A.
\end{equation}
For $i=1, \cdots, n$, set
\begin{equation}
 \left(
\begin{array}{ccc}
f_1,  &
 \cdots &
 ,f_{n} 
\end{array}
\right)=(0,\cdots,0,\overset{i}{\check{1}},0,\cdots,0)A^{-1}.
\end{equation}Then, from $\textrm{rank}_{\mathbb{C}}A=n$ we can obtain $\left(\begin{array}{ccc}
f_1,  &
 \cdots &
 ,f_{n} 
\end{array}\right)$ such that $f_n\neq 0$ for some $i\ (1\leq i\leq n)$. Then $\mathcal{L}_Y\Omega=\lambda \Omega$ holds for $Y=\sum_{i=1}^{i=n-1}f_iv_i+f_n T$ and CR imbeddability is shown.  
The converse is trivial. Let $\iota$ be a CR imbedding map from $U$ to $\mathbb{C}^n$ and $(z_1,\cdots,z_n)$ be a coordinate in $\mathbb{C}^n$. Then $<\iota^{*}dz_i> \ (1\leq i\leq n)$ is a CR coframe on $U$.

\end{proof}
\begin{rem}
As examples of non-imbeddable CR manifolds besides $3$-dim spc manifolds, a class of CR twister manifolds is also famous.
It was given by LeBrun \cite{Leb}.
\end{rem}
\section{Non-framable CR vector bundle structures}
\ Let $D$ be a $3$-dim CR manifold, $E= D \times \mathbb{C}^r$ and $p\in D$. $E$ has a trivial CR vector bundle structure $\overline{\partial}_b$, so we can write any CR vector bundle structure over $E$ as $\overline{\partial}_{E}=\overline{\partial}_b + \omega$,
where $\omega \in \Gamma(\mathcal{A}^{0,1}(\textrm{End} E))$. We regard $\mathcal{A}^{0,1}(\textrm{End} E)$ as the $\mathcal{A}^{0,1}$-valued $\textrm{r}\times\textrm{r}$ matrix space $\mathcal{M}(\mathrm{r},\mathcal{A}^{0,1})$.
As the integrability condition of $\overline{\partial}_{E}$, $\omega$ satisfies $\overline{\partial}_b\omega + \omega\wedge \omega=0$. Since $\mathcal{A}^{0,2}=0$, we have $\mathcal{H}''(E)=\{\overline{\partial}_b + \omega; \omega\in \Gamma(\mathcal{M}(\mathrm{r},\mathcal{A}^{0,1})), \overline{\partial}_b\omega + \omega\wedge \omega=0
 \}=\{\overline{\partial}_b + \omega; \omega \in \Gamma(\mathcal{M}(\mathrm{r},\mathcal{A}^{0,1}))\}$, where $\mathcal{H}''(E)$ denotes the set of all CR vector bundle structures over $E$. We choose the natural frame $e=<e_i>_{1\leq i \leq r}$ of the trivial bundle $E$. Then for $\overline{\partial}_{E}=\overline{\partial}_b+\omega$, 
 $\overline{\partial}_{E}e = \omega e$ holds. In this section, we shall ask whether there exist non-framable CR line bundles over a $3$-dim non-imbeddable CR manifold $D$. In \cite{Hor}, H\"{o}rmander gives a necessary condition for a linear PDE $Pu=f$ to have a local solution for every $\mathbb{C}$-valued function $f\in C^\infty$. (See Theorem 6.1.1, Theorem 6.1.2 in \cite{Hor}.) Since the PDE for framability of a CR line bundle is $a^{-1}\overline{\partial}_ba=-\omega$ and it is reduced to a $\overline{\partial}_b$-equation $\overline{\partial}_b(\log a)=-\omega$. However it is hard to check whether H\"{o}rmander's condition holds or not. So we will try another approach using a non-framable CR vector bundle structure $\overline{\partial}_{{T^{0,1}D}^\bot}$ obtained in the previous section. As a result we give an answer partially. In the end we shall ask how many non-framable CR line bundle structures exist in a CR line bundle structure space and how they exist there. Through this section, note the following two facts. For any $\mathbb{C}$-vector bundle $F$ over a $3$-dim CR manifold $D$, there exist CR vector bundle structures (i.e. $\mathcal{H}''(F)\neq\emptyset$). This is verified in the same way as construction of connections in vector bundles. (Take a covering $\{U_\alpha\}$ of $D$ and the associate partition of unity $\{\rho_\alpha\}$. Then set $\omega_\alpha=\sum_\beta \rho_\beta\cdot(\overline{\partial}_b A^{-1}_{\alpha \beta}\cdot A_{\alpha \beta})$, where $\{A_{\alpha \beta}\}$ is a family of transition functions.) CR vector bundle can be defined through a connection satisfying a certain integrability condition (see \cite{Web}), so checking that on a $3$-dim CR manifold $D$ any connection satisfies this integrability condition is another way. The key is that rank $T^{0,1}D^\ast=1$ $(\mathcal{A}^{0,2}=0).$ The second fact is that $E$ is framable around $p$ for any CR vector bundle structure over $E$ if and only if a $\mathbb{C}$-vector bundle $F$ over $D$ of rank $r$ is framable around $p$ for any CR vector bundle structure over $F$. This is also easily verified from $\mathcal{H}''(F)\neq\emptyset$ and $\mathcal{A}^{0,2}=0$. By these facts we may prove Theorem \ref{thm2} and Theorem \ref{thm3} in the following simpler forms. 

\begin{thmbis}{thm2}
 Let $D$ be a $3$-dim CR manifold, $E'= D \times \mathbb{C}$, and $E''= D \times \mathbb{C}^2$. Assume the existence of a non-framable CR vector bundle structure $\omega_0\in \mathcal{H}''(E'')$. If there are no CR local frames but there is a nowhere-vanishing CR local section around $p$ for $\omega_0$, then there exist non-framable CR line bundle structures in $\mathcal{H}''(E')$. 
\end{thmbis}
\begin{thmbis}{thm3}  
Let $D$ be a $3$-dim CR manifold and $E'= D \times \mathbb{C}$. If there is a non-framable CR line bundle structure over $E'$, then there exist non-framable structures arbitrarily close to any framable structure in $\mathcal{H}''(E')$.  
\end{thmbis}

 \begin{prop} \label{Pro4}
 Let $E'=D \times \mathbb{C}$ be a $\mathbb{C}$-line bundle over a $2n-1 \ (n\geq 2)$ dimensional CR manifold $(D,T^{0,1}D)$. Then all CR line bundle structures over $E'$ are framable if and only if $\underrightarrow{\lim}_{p\in U}H^{0,1}(U)=0$ for every $p\in D$, where 
 $U$ runs through the neighborhoods of $p$. 
\end{prop}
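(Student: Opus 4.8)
The plan is to translate the statement entirely into the language of the $\overline{\partial}_b$-complex and then read off both directions from the exactness of that complex at the $(0,1)$-level. First I would fix the correspondence between structures and forms: since $E'$ has rank one, for $\omega\in\Gamma(\mathcal{A}^{0,1})$ one has $\omega\wedge\omega=0$ automatically, so the integrability condition $\overline{\partial}_b\omega+\omega\wedge\omega=0$ reduces to $\overline{\partial}_b\omega=0$; hence CR line bundle structures over $E'$ are in bijection with global $\overline{\partial}_b$-closed $(0,1)$-forms $\omega$. Next I would rephrase framability. A CR local section is $ae$ with $\overline{\partial}_b a+a\omega=0$, and for a frame $a$ must be nowhere-vanishing; writing $a=e^{-h}$ on a small (simply connected) neighborhood and using the line-bundle identity of the Proposition in Section \ref{sec2}, the equation $a^{-1}\overline{\partial}_b a=-\omega$ becomes $\overline{\partial}_b h=\omega$. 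Thus $\overline{\partial}_b+\omega$ is framable around $p$ if and only if the germ of $\omega$ at $p$ is $\overline{\partial}_b$-exact, and it is framable everywhere if and only if $\omega$ is locally $\overline{\partial}_b$-exact at every point.

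With this in hand I would identify $\varinjlim_{p\in U}H^{0,1}(U)$ with the stalk at $p$ of the degree-$(0,1)$ cohomology sheaf of the $\overline{\partial}_b$-complex, i.e. with $\mathcal{Z}_p/\mathcal{B}_p$, where $\mathcal{Z}$ is the sheaf of $\overline{\partial}_b$-closed $(0,1)$-forms and $\mathcal{B}$ the sheaf of germs of $\overline{\partial}_b$-exact $(0,1)$-forms; its vanishing says precisely that every closed germ at $p$ is exact near $p$. The ``if'' direction is then immediate: if this stalk is $0$ for every $p$, then for every global closed $\omega$ the germ $\omega_p$ is exact at each $p$, so $\overline{\partial}_b+\omega$ is framable, and all structures are framable.

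For the ``only if'' direction I would argue by contraposition: assuming some stalk $\mathcal{Z}_p/\mathcal{B}_p\neq 0$, I must manufacture a \emph{global} closed $(0,1)$-form that fails to be locally exact near $p$. Starting from a representative $\omega$ that is closed on a neighborhood $V$ of $p$ but has a non-exact germ at $p$, I would cut off with $\rho$ equal to $1$ near $p$ and supported in $V$ and consider $\rho\omega$, whose $\overline{\partial}_b$ equals $\beta:=\overline{\partial}_b\rho\wedge\omega$, a $\overline{\partial}_b$-closed $(0,2)$-form supported in $V$ and vanishing near $p$. I would then seek $\eta\in\Gamma(\mathcal{A}^{0,1})$ with $\overline{\partial}_b\eta=\beta$ and $\eta\equiv 0$ near $p$, so that $\tilde\omega:=\rho\omega-\eta$ is a global closed form agreeing with $\omega$ near $p$, hence with non-exact germ at $p$; the corresponding structure is then non-framable.

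The main obstacle is exactly this globalization step, namely solving $\overline{\partial}_b\eta=\beta$ with control of the support away from $p$. In the case of primary interest, $\dim D=3$ (so $n=2$), one has $\mathcal{A}^{0,2}=0$, hence $\beta=0$ and $\tilde\omega=\rho\omega$ is already closed, making the construction immediate. For general $n\geq 2$ the term $\beta$ need not vanish, and producing $\eta$ requires a local $\overline{\partial}_b$-solvability statement for the $\overline{\partial}_b$-closed $(0,2)$-form $\beta$ on the annular region where $\rho$ is non-constant, together with the vanishing of $\eta$ near $p$; establishing this is where the real work lies.
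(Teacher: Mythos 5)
Your core reduction coincides with the paper's: CR line bundle structures over $E'$ are exactly the global $\overline{\partial}_b$-closed $(0,1)$-forms, and $\overline{\partial}_b+\omega$ is framable near $p$ precisely when $\overline{\partial}_b f=\omega$ is locally solvable there, via $a=e^{f}$ (the paper writes this as $\overline{\partial}_b(\log a)=-\omega$). Where you genuinely diverge is in the ``only if'' direction, and the divergence is to your credit. The paper's proof never leaves the realm of global forms: what it actually establishes is that all structures are framable if and only if every \emph{global} closed $(0,1)$-form is locally exact at every point, and it silently identifies this condition with the vanishing of $\varinjlim_{p\in U}H^{0,1}(U)$, whose nonzero classes are represented by forms defined only on a neighborhood of $p$ and need not a priori come from restrictions of global closed forms. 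Your cutoff construction $\tilde\omega=\rho\omega-\eta$ is exactly the missing bridge from a nonzero germ class to a global non-framable structure, and your dichotomy is the correct one: when $\dim D=3$ the error term $\overline{\partial}_b\rho\wedge\omega$ vanishes because $\mathcal{A}^{0,2}=0$, so $\rho\omega$ is itself a global integrable structure whose germ at $p$ is non-exact, and the proof closes; when $n\geq 3$ one must solve $\overline{\partial}_b\eta=\overline{\partial}_b\rho\wedge\omega$ with $\eta$ vanishing near $p$, and neither you nor the paper supplies this. So your proposal should be read as a completion of the paper's one-paragraph proof in the three-dimensional case (the case relevant to the rest of the paper), and as an honest identification of a gap --- inherited from, and unacknowledged in, the paper's own argument --- in the general case $n\geq 3$, where some local $\overline{\partial}_b$-solvability in degree $(0,2)$ with support control (or a sheaf-theoretic substitute) is genuinely needed.
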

\begin{proof} In the $\mathbb{C}$-line bundle case, the set of all CR line bundle structures over $E'$ is $\{\overline{\partial}_b+\omega; \omega\in \Gamma(\mathcal{A}^{0,1}), \overline{\partial}_b\omega=0\}$.
And the PDE for framability can be written $a^{-1}\overline{\partial}_ba=\overline{\partial}_b(\log a)=-\omega$. 
Suppose a PDE $\overline{\partial}_bf=\omega$ can't be solved for some $\omega\in \Gamma(\mathcal{A}^{0,1})$ such that $\overline{\partial}_b\omega=0$, where $f$ is an unknown function. Then this $\omega$ gives a non-framable CR line bundle structure. If a PDE $\overline{\partial}_bf=-\omega$ can be solved locally around every point in $D$ for any $\omega\in \Gamma(\mathcal{A}^{0,1})$ such that $\overline{\partial}_b\omega=0$, $\overline{\partial}_b(\log a)=-\omega$ has a nowhere-vanishing local solution $a=e^f$. Therefore $\omega$ is framable.
\end{proof}
 \begin{prop} \label{Pro}
 Let $D$ be a $3$-dim CR manifold, $E'= D \times \mathbb{C}^r$ and $\omega\in\mathcal{H}''(E')$. Put $\omega'= \overline{\partial}_b S^{-1}S+S^{-1} \omega S$ for a $\textrm{GL}(r,\mathbb{C})$ valued function $S\in\Gamma(D\times \textrm{GL}(r,\mathbb{C}))$.
Then $\omega$ is framable if and only if $\omega'$ is framable.
 \end{prop}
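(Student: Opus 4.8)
The plan is to read off framability directly from the nonlinear PDE criterion recalled in Section~\ref{sec2} and to exhibit an explicit change of unknown carrying a solution for $\omega$ to one for $\omega'$. Recall that $\overline{\partial}_b+\omega$ is framable around a point $q$ precisely when $a^{-1}\overline{\partial}_b a=-\omega$, equivalently $\overline{\partial}_b a=-a\,\omega$, has a local solution $a$ with $\det a\neq 0$ near $q$; likewise $\omega'$ is framable near $q$ iff $\overline{\partial}_b b=-b\,\omega'$ has such a solution $b$. Since the statement is local at every point, it suffices to transform solutions near an arbitrary $q$.

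First I would record the two elementary facts about $\overline{\partial}_b$ acting entrywise on matrix-valued functions: it satisfies the Leibniz rule $\overline{\partial}_b(PQ)=(\overline{\partial}_b P)\,Q+P\,(\overline{\partial}_b Q)$, and, applying this to $S^{-1}S=I$, the identity $(\overline{\partial}_b S^{-1})\,S=-S^{-1}\,\overline{\partial}_b S$. These are the only structural inputs needed.

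The central step is the substitution $b=aS$. Given a local solution $a$ of $\overline{\partial}_b a=-a\,\omega$ with $\det a\neq 0$, I would compute
\[
b^{-1}\overline{\partial}_b b
= S^{-1}a^{-1}\bigl((\overline{\partial}_b a)\,S+a\,\overline{\partial}_b S\bigr)
= S^{-1}\bigl(a^{-1}\overline{\partial}_b a\bigr)S+S^{-1}\overline{\partial}_b S
= -S^{-1}\omega S+S^{-1}\overline{\partial}_b S,
\]
and then rewrite $S^{-1}\overline{\partial}_b S=-(\overline{\partial}_b S^{-1})\,S$ to obtain $b^{-1}\overline{\partial}_b b=-\bigl((\overline{\partial}_b S^{-1})\,S+S^{-1}\omega S\bigr)=-\omega'$. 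Since $\det b=\det a\cdot\det S\neq 0$, the bundle $\overline{\partial}_b+\omega'$ is framable wherever $\overline{\partial}_b+\omega$ is.

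For the converse I would note that the transformation $\omega\mapsto\omega'$ is invertible of the same type: solving $\omega'=(\overline{\partial}_b S^{-1})\,S+S^{-1}\omega S$ for $\omega$ and setting $T=S^{-1}$ gives $\omega=(\overline{\partial}_b T^{-1})\,T+T^{-1}\omega' T$, which has exactly the original form with $(S,\omega)$ replaced by $(T,\omega')$. Hence the same computation, now with $a=bS^{-1}$, produces a solution for $\omega$ from one for $\omega'$, completing the equivalence. (One may also remark that $\omega'\in\mathcal{H}''(E')$ automatically, since $\mathcal{A}^{0,2}=0$ forces the integrability condition to hold for every matrix $(0,1)$-form.) There is no genuine obstacle here: the content is essentially a one-line matrix computation, and the only point requiring care is the bookkeeping of left/right multiplication together with the sign in $(\overline{\partial}_b S^{-1})\,S=-S^{-1}\overline{\partial}_b S$, which dictates that the correct substitution is the right multiplication $b=aS$ rather than a conjugation.
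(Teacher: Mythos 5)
Your proof is correct and takes essentially the same approach as the paper: the change of unknown $b=aS$ (the paper's $a'=aS$, i.e.\ $a=a'S^{-1}$ in its equation (\ref{eq3})), combined with the identity $(\overline{\partial}_bS^{-1})S=-S^{-1}\overline{\partial}_bS$ from $\overline{\partial}_b(S^{-1}S)=0$, is exactly the paper's argument. The only cosmetic difference is that you run the two implications separately (invoking the symmetry $T=S^{-1}$ for the converse), whereas the paper presents it as an equivalence of the two PDEs under conjugation by $S$.
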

 
 \begin{proof}
The PDE for framability of $\omega$ is 
 \begin{equation} 
 a^{-1}\overline{\partial}_ba=-\omega. \label{eq1}
 \end{equation}

 We consider the PDE
 \begin{equation}
 a'^{-1}\overline{\partial}_ba'=-\omega'= -(\overline{\partial}_bS^{-1}S+S^{-1}\omega S). \label{eq2}
\end{equation}
Noting that 
\begin{equation}
\overline{\partial}_b(S^{-1}S)=\overline{\partial}_bS^{-1}S+S^{-1}\overline{\partial}_bS=0, \label{eqI}
\end{equation}
(\ref{eq2}) can be written as
\begin{equation}
a'^{-1}\overline{\partial}_ba'-S^{-1}\overline{\partial}_bS=-S^{-1}\omega S. \label{eq2'}
\end{equation}

Here, we consider a PDE
\begin{equation}
 -\omega=(a'S^{-1})^{-1}\overline{\partial}_b(a'S^{-1})=S(a'^{-1}\overline{\partial}_ba'-S^{-1}\overline{\partial}_bS
)S^{-1}. \label{eq3}
\end{equation}
  (\ref{eq3}) is solvable if and only if (\ref{eq2'}) is solvable. This shows that $\omega$ is framable if and only if $\omega'$ is framable.
 
 \end{proof}

\begin{proof}[Proof of Theorem \ref{thm2}']
\ Let \ $\omega_0=\left(
\begin{array}{cc}
\omega_1^1 & \omega_1^2 \\
\omega_2^1 & \omega_2^2
\end{array}
\right)$ be some non-framable CR vector bundle structure in $\mathcal{H}''(E'')$
and
$S'=\left(
\begin{array}{cc}
s_1' & 0 \\
0 & s_2'
\end{array}
\right)$ be a $\textrm{GL}(2,\mathbb{C})$ valued matrix function. Let 
\begin{equation}
\omega_0'=\overline{\partial}_b S'^{-1}S'+S'^{-1} \omega_0 S'= 
\left(
\begin{array}{cc}
-s_1'^{-1}\overline{\partial}_bs_1'+\omega_1^1 & s_1'^{-1}s_2'\omega_1^2 \\
s_2'^{-1}s_1'\omega_2^1 & -s_2'^{-1}\overline{\partial}_bs_2'+\omega_2^2         
\end{array}
\right) \label{eq.n1} .  
\end{equation}
Then the PDE $-s_1'^{-1}\overline{\partial}_bs_1'+\omega_1^1=0$ or $-s_2'^{-1}\overline{\partial}_bs_2'+\omega_2^2=0$
can be solved if all CR line bundle structures are framable, where $s_1'$ and $s_2'$ are unknown functions. By picking up these solutions, we may assume 
 \begin{equation}\omega_0'=
\left(
\begin{array}{cc}
0 & s_1'^{-1}s_2'\omega_1^2 \\
s_2'^{-1}s_1'\omega_2^1 & 0
\end{array}
\right)   \label{eq.n2}
\end{equation} around $p\in D$.

If we assume that the second component of the local frame is a nowhere vanishing CR local section, we can set $\omega_0=\left(
\begin{array}{cc}
\omega_1^1& \omega_1^2 \\
0 & 0
\end{array}
\right)$. In addition, using the above argument we can reset $\omega_0'=
\left(
\begin{array}{cc}
0& \omega_1' \\
0 & 0
\end{array}
\right)$. 
If we set $S=\left(
\begin{array}{cc}
1 & s \\
0 & 1
\end{array}
\right) \ (s\neq 0)$, then 
$\overline{\partial}_b S^{-1}S+S^{-1} \omega_0'S $
\begin{equation}
=\left(
\begin{array}{cc}
0 & -\overline{\partial}_bs+\omega_1'
  \\
0 & 0

\end{array} 
\right) \label{eq.n3}.
\end{equation}

If all CR line bundle structures are framable, we can pick up $s(\neq 0)$ such that $-\overline{\partial}_bs+\omega_1'=0$, and
it is contradictory because $\omega=0$ is framable around $p$.
\end{proof}
Theorem \ref{thm2} implies that, on a $3$-dim non-imbeddable CR manifold $D$, if there is a nowhere-vanishing CR 1-form locally around every point in $D$, there exist non-framable CR line bundle structures over any $\mathbb{C}$-line bundle $F'$.

\begin{cor} \label{cor1}
Let $D$ be a $3$-dim non-imbeddable CR manifold and $F'$ be a $\mathbb{C}$-line bundle. If there is a local CR function $f$ such that $df_p\neq 0$ at every $p\in D$, there exist non-framable CR line bundle structures in $\mathcal{H}''(F')$.
\end{cor}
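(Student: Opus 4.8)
The plan is to obtain this as a direct application of Theorem~\ref{thm2}, taking for $F''$ the canonical rank-$2$ CR vector bundle $\bigwedge^1(T^{0,1}D)^\bot=(T^{0,1}D)^\bot$ (which has rank $n=2$ since $D$ is $3$-dimensional) equipped with its intrinsic CR structure $\omega_0=\overline{\partial}_{(T^{0,1}D)^\bot}$, and for $F'$ the given line bundle. Fix a point $p\in D$ at which local imbeddability into $\mathbb{C}^2$ fails; all the hypotheses of Theorem~\ref{thm2} will be checked at this $p$.

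First I would verify the non-framability of $\omega_0$ near $p$. A CR local frame of $(T^{0,1}D)^\bot$ is by definition a frame composed of CR $1$-forms, that is, a CR coframe. By Theorem~\ref{thm1}, the existence of such a coframe on a neighborhood of $p$ is equivalent to a local imbedding of $D$ into $\mathbb{C}^{2}$ near $p$. Since $D$ is non-imbeddable at $p$, no CR coframe exists there, so $\omega_0$ admits no CR local frame around $p$ and is in particular non-framable; this supplies both the standing assumption of Theorem~\ref{thm2} and its ``no CR local frames around $p$'' clause.

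Second I would produce the required nowhere-vanishing CR local section from the hypothesis. Given the local CR function $f$ with $df_q\neq 0$ for every $q$, the $1$-form $df$ is a section of $(T^{0,1}D)^\bot$, since $\overline{\partial}_b f=0$ forces $df$ to annihilate $T^{0,1}D$; moreover $d(df)=0$ lies trivially in $\bigwedge^{2}(T^{0,1}D)^\bot$, so $df$ is a CR $1$-form, i.e.\ a CR section for $\omega_0$. By assumption $df$ is nowhere-vanishing, so it is exactly the nowhere-vanishing CR local section around $p$ demanded by Theorem~\ref{thm2}.

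With these two verifications in hand the triple $(F'',\omega_0,p)$ meets the hypotheses of Theorem~\ref{thm2} verbatim, and the theorem yields non-framable CR line bundle structures in $\mathcal{H}''(F')$. The only genuine content beyond bookkeeping is the identification carried out in the previous two paragraphs---recognizing $(T^{0,1}D)^\bot$ as the relevant rank-$2$ CR bundle whose framability is governed by Theorem~\ref{thm1}, and recognizing $df$ as a nowhere-vanishing CR section of it. I do not expect a real obstacle here, as Theorems~\ref{thm1} and~\ref{thm2} carry all the analytic weight and the corollary is immediate once the dictionary between CR $1$-forms and CR sections of $(T^{0,1}D)^\bot$ is made explicit.
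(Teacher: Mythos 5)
Your proposal is correct and matches the paper's argument: the paper's one-line proof (``$df$ is a nowhere-vanishing CR 1-form'') together with the remark preceding the corollary is exactly your construction, namely taking $F''=(T^{0,1}D)^\bot$ with its intrinsic structure, invoking Theorem~\ref{thm1} for non-framability at a point where imbeddability fails, and feeding $df$ into Theorem~\ref{thm2} as the nowhere-vanishing CR section. You have simply made explicit the dictionary (CR coframe $=$ CR frame of $(T^{0,1}D)^\bot$, CR $1$-form $=$ CR section) that the paper leaves implicit.
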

\begin{proof}
$df$ is a nowhere-vanishing CR 1-form.
\end{proof}
\begin{rem}
Corollary \ref{cor1} also follows from Theorem $2$ in \cite{Jac}.
\end{rem}
Hereafter, framability of CR vector bundle structures around a framable structure $\omega_1\in\mathcal{H}''(E)$ 
is discussed. We consider framability of a CR vector bundle structure $\omega_1+\omega_\delta$, which is
a perturbation of $\omega_1$. Since $\omega_1$ is framable, there exists a $\textrm{GL}(r,\mathbb{C})$ valued $a_1$ such that
\begin{equation}
a_1^{-1}\overline{\partial}_ba_1=-\omega_1.
\end{equation}
The PDE for framability of $\omega_1+\omega_\delta$ is 
\begin{equation}
a^{-1}\overline{\partial}_ba=-(\omega_1+\omega_\delta)=-(\overline{\partial}_b a_1^{-1}a_1+\omega_\delta). \label{eq4}
\end{equation}
Set $\omega_\delta= a_1^{-1}\omega_\delta' a_1$. Then from Proposition \ref{Pro}, $\omega_1+\omega_\delta$ is framable
if and only if $\omega_\delta'$ is framable. This implies that if we can construct arbitrarily small perturbations
 which are non-framable, we can find non-framable structures around every framable structure in $\mathcal{H}''(E)$.
From here we consider the case of CR line bundles. We set $\omega_\delta'=\delta\omega_0,0<\delta\leq 1$. Let $c=\inf\{\delta;\omega_\delta' \text{ is non-framable}\}$. If $c=0$, we can obtain the small perturbations as above. 
We consider the case $c>0$. In this case, $\omega_\delta'\ (0<\delta<c)$ are framable. 
For $\delta_1\ (0<\delta_1<c)$, there exists $L_{\delta_1}$ such that 
\begin{equation}
L_{\delta_1}\overline{\partial}_bL_{\delta_1}^{-1}=-\delta_1\omega_0\ 
\end{equation}
Let $\delta_2\geq c$ and $\omega_{\delta_2}={\delta_2}\omega_0$ be non-framable. Then,
\begin{equation}
\overline{\partial}_bL_{\delta_1}^{-1}L_{\delta_1}+L_{\delta_1}^{-1}(\delta_2\omega_0)L_{\delta_1}=(\delta_2-\delta_1)\omega_0.
\end{equation}
Therefore, from Proposition \ref{Pro} 
$(\delta_2-\delta_1)\omega_0$ are non-framable and $\delta_2-\delta_1>0$ can be arbitrarily small.
It's contradictory to $c>0$. The argument above proves Theorem \ref{thm3}'.

\section*{Acknowledgment}
The author is grateful to Professor T.Ohsawa for his patient direction and constant support for about three years, and to the referee for his valuable comments.

\end{document}